\newtheorem{prop}{Proposition}
\newtheorem{cor}[prop]{Corollary}
\newtheorem{theorem}[prop]{Theorem}
\newtheorem{lemma}[prop]{Lemma}
\DeclareMathOperator{\CM}{CM}
\begin{document}
\title{Cookie Monster Plays Games}
\author{Tanya Khovanova \\ MIT \and Joshua Xiong \\ Acton-Boxborough Regional High School}

\maketitle

\begin{abstract}
We research a combinatorial game based on the Cookie Monster problem called the Cookie Monster game that generalizes the games of Nim and Wythoff. We also propose several combinatorial games that are in between the Cookie Monster game and Nim. We discuss properties of P-positions of all of these games.

Each section consists of two parts. The first part is a story presented from the Cookie Monster's point of view; the second part is a more abstract discussion of the same ideas by the authors.
\end{abstract}

\section{Cookie Monster wants to play games}

Cookie Monster likes cookies. His mommy used his love for cookies to teach him to think and to play some mathematical games. She set up the following system with cookies. Cookie Monster's Mommy has a set of $k$ jars filled with cookies. In one move she allows Cookie Monster to choose any subset of jars and take the same number of cookies from each of those jars. Cookie Monster always wants to empty all of the jars in as few moves as possible. 

For example, if there are three jars with 1, 2, and 4 cookies he needs three moves. He can empty them one jar at a time. Or he can take one cookie from all of the jars in the first move, after that he will still need two more moves. But if the three jars have 1, 2, and 3 cookies, he can empty them in two moves. In the first move he can take one cookie from the first jar and the third jar. After that the two non-empty jars have 2 cookies in each. So he can empty the whole set of jars in one more move.

If there are $k$ jars with distinct number of cookies it is always possible to empty them in $k$ moves. Cookie Monster Mommy tries to make it interesting and sets up jars so that it is always possible to empty $k$ jars with distinct number of cookies in fewer than $k$ moves. For example, once she arranged the Fibonacci sequence of cookies in jars: $\{1,2,3,5,8\}$. Cookie Monster figured out how to empty the jars in 3 moves.

Cookie Monster Mommy tries to invent interesting sequences of numbers to use as the number of cookies in the jars and Cookie Monster tries to find the smallest number of moves. This is like a game.

But still, something was missing in this game. His mommy was in charge of the cookies, and he tried to solve her puzzles. Cookie Monster realized that he wants a game with his mommy, where he feels equal: a game in which two people have the same options at each move.

\subsection{Authors' comments}

Mathematicians now call the smallest number of moves for a given set $S$ of cookies in jars the Cookie Monster number of the set $S$. It is denoted as $\CM (S)$. The problem of finding the Cookie Monster number of a set of jars is called the Cookie Monster Problem. The problem first appeared in \textit{The Inquisitive Problem Solver} by Vaderlind, Guy, and Larson \cite{VGL}. Mathematicians got interested and wrote papers about the cookie monster problem and the cookie monster number \cite{B, BK, BrK}. Now eating cookies is not enough for the monster. The mathematical name for his new interest is an \textit{impartial combinatorial game}, a game in which two players each have the same moves available on any turn.

As we will see, Cookie Monster discovers the games of Marienbad and Nim in Sections~\ref{marienbad}~and~\ref{nim}. In Section~\ref{twojars}, Cookie Monster invents how to convert the Cookie Monster problem into a game. Cookie Monster tries the simplest case with two jars first, and then he finds out that the game is already known as Wythoff's game. In Section~\ref{cmgame}, Cookie Monster examines the Cookie Monster game with three jars, which is a previously unknown game. In Sections~\ref{sumofgames}, \ref{oddgame}, and \ref{othergames}, Cookie Monster invents many variations of the Cookie Monster game and calculates their P-positions. In Section~\ref{bigpicture} Cookie Monster discovers properties of P-positions of all the games and finds out that the maximum element in a P-position is bounded in terms of other elements.

\section{The Game of Marienbad}\label{marienbad}

Cookie Monster started bugging his mommy for a game. But mommy wanted to watch a movie. Then Cookie Monster's Mommy said, ``I heard there is a game in the movie. Let me watch the movie; I will remember the game and then we can play it.'' Cookie Monster agreed. He even tried to watch the movie himself. The movie was called ``Last Year at Marienbad.'' But unfortunately, the movie wasn't animated, and even worse, it was black and white. He became bored in two minutes. But his mother promised to call him each time the game was played.

The mysterious man in the movie introduced the game by saying, ``I know a game I always win.'' The second man replied, ``If you can't lose, it's no game.'' And the mysterious man said, ``I can lose, but I always win.'' It intrigued Cookie Monster.

You can use any objects to play the game. In the movie they sometimes played it with matches. That was so cool. Cookie Monster's Mommy didn't allow him to touch matches. In the movie the men didn't light the matches, but still it was cool. There were four rows of matches, and there were 1, 3, 5, or 7 matches in each row as in Figure~\ref{fig:marienbad}. In one move, a player can choose a row and take any number of matches from that row. The player who is forced to take the last match loses. The mysterious man in the movie always won. 

\begin{figure}[htbp]
  \centering
    \includegraphics[scale=0.4]{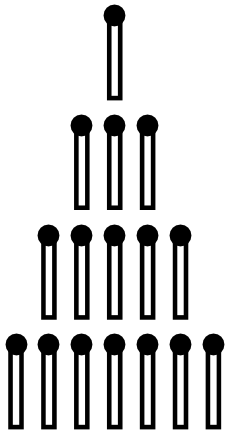}
      \caption{Initial setup in the game of Marienbad}\label{fig:marienbad}
\end{figure}

After the movie Cookie Monster disappeared, and his Mommy didn't go looking for him; she had so many other more important things to do than play games. Meanwhile, Cookie Monster tried to figure out the game. He realized that at the end of the game, when only piles of one match remained, he would win if he left an odd number of such piles. This means that if all but one of the piles have one match, he has a winning strategy: he can take either all of the matches from the largest pile, or all but one match from it to make sure that an odd number of piles of one match are left. This is his end-game.

Cookie Monster started analyzing the game from the end. He denotes a position using a list of numbers in parentheses to represent the number of matches in each pile. The starting position is (1,3,5,7). He figured out that if after his move the numbers in piles are doubled, as in (3,3) or (1,1,5,5), he can win. Whatever his opponent does, Cookie Monster can keep this double property, and eventually, Cookie Monster will be able to use his end-game strategy. Cookie Monster decided to call the positions that he needs to move to in order to win the game P-positions, from the word \textbf{p}ositive. He feels good and positive moving to a P-position. In addition, Cookie Monster called the other positions N-positions from the word \textbf{n}egative, since he does not want to move into these positions.

Cookie Monster tried to play with himself and found another P-position: (1,2,3). If he finishes in this position, he is guaranteed to win. He continued his search and found two more P-positions: (1,4,5) and (2,4,6). Curiously, the latter is a double of the P-position (1,2,3).

He found more P-positions, but he kept forgetting the exact numbers. The only thing that he remembers is if the first player starts removing one match from (1,3,5,7), he needs to remove one match from any other pile.

Now he is ready to play with his mommy.

\subsection{Authors' comments}

The game in the movie is known as the game of Marienbad. It is a variation of another game called Nim. In the game of Nim you can have any initial position. In addition, the winning condition for Nim is different: the person who makes the last move wins. The variation in the movie, where the person who takes the last match loses, is called mis\`{e}re.

The notion of P-positions and N-positions exists in game theory. Cookie Monster reinvented these names with the same meaning. But in game theory, the name ``P-position'' actually comes from the fact that the \textbf{p}revious player wins after leaving such a position, assuming the use of an optimal strategy. Similarly, the name ``N-position'' is due to the fact that the \textbf{n}ext player wins with the correct strategy. 

Not counting the very end of the game---when only piles of size one remain---the P-positions are the same for the regular Nim and the mis\`{e}re variation. Thus, the strategy is nearly the same for both versions; the only difference is in the end-game: In the standard game of Nim, a player needs to leave an even number of piles of size one (and no other piles) in order to win. In the mis\`{e}re variation, a player needs to leave an odd number of piles of size one in order to win.

To denote a position in a game, Cookie Monster uses an ordered tuple of numbers in parentheses. The starting position in Marienbad is (1,3,5,7), which is a P-position. The P-positions that Cookie Monster has trouble remembering are: (2,5,7), (3,4,7), (3,5,6), (1,2,4,7), (1,2,5,6), and (1,3,4,6).

There is actually a formula for P-positions of Nim; we will show it in the next section.

\section{Cookie Monster Plays Nim}\label{nim}

Now Cookie Monster is ready to play with his mommy. And mommy agreed to play with matches. Hooray! They started playing the game as it was set up in the movie and mommy was first to move. After Cookie Monster's Mommy lost several games she asked to start second. Cookie Monster couldn't delay this moment forever, so he agreed. That was a challenge. But Cookie Monster decided to start with a small move: the more matches are on the table the more difficult it is for his mom to figure out the winning strategy. Also, the longer the game goes on, the more moves both of them make, and the more chances there are for mommy to make a mistake.

Cookie Monster won again, and wanted to be the second player. His mommy realized that something fishy was going on: that the second player has an advantage. She refused to be the first player. Cookie Monster didn't want to risk losing, so he offered to play the standard game of Nim instead: the person who does not have a move loses. It seems the winning condition is the opposite, so one might think that the advantage moves from the second player to the first player. 

Mommy agreed to be the first player, and she lost three times in a row. She wanted to be the second player again. To divert her, Cookie Monster suggested a ``fair'' game: they will both choose something. His mommy chooses a starting position, then Cookie Monster chooses who goes first. Of course, Cookie Monster knew that this game was not fair. This is because Cookie Monster will choose to be second if his mommy chooses a P-position, and first otherwise. This way he would always win.

Cookie Monster's Mommy is very smart. She lost every game because she was busy baking cookies and did not have enough time to think about the game. This time, however, she saw right through him and didn't agree to his ``fair'' suggestion. Oh well. At least he can try this game on his friends.

\subsection{Authors' comments}

There is a formula for P-positions in the game of Nim. From now on we will talk about the standard game, where the player who cannot move loses. This means that the position with all piles empty is a P-position. 

In addition, we want to emphasize that any move from every P-position must end up in an N-position. And there should exist at least one move from every N-position to a P-position.

To state the formula, first we need to define the \textit{nim-sum}, or \textit{bitwise XOR} operation: $\oplus$. Nim-sum can be described as a binary addition without carry. In other words, the nim-sum of two numbers is produced by representing each number as a sum of distinct powers of two, canceling each power of two that appears twice, and adding the remaining powers of two. The nim-sum is a commutative and associative operation. 

\begin{theorem}[Bouton]
The P-positions in the game of Nim are formed by a set of numbers with nim-sum zero.
\end{theorem}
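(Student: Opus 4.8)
The plan is to verify that the set $Z$ of positions with nim-sum zero satisfies the two characterizing properties of P-positions stated just before the theorem: every move from a position in $Z$ leads outside $Z$, and from every position not in $Z$ there is at least one move into $Z$. Since the all-empty position lies in $Z$ and the game terminates, these two properties force $Z$ to be exactly the set of P-positions (one shows by induction on the length of the game that the player to move from a $Z$-position must eventually hand the opponent a $Z$-position and thus loses, while a player faced with a non-$Z$-position can always move back into $Z$).

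First I would handle the ``closure upward'' direction: if $(a_1,\dots,a_k)$ has nim-sum $0$ and a single move changes $a_i$ to $a_i' < a_i$ in one pile, then the new nim-sum equals $a_i \oplus a_i'$ (every other term cancels as before, using commutativity and associativity of $\oplus$), and this is nonzero because $a_i \ne a_i'$. Hence no move preserves membership in $Z$.

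Next I would handle the ``reachability'' direction, which is the main obstacle and the only step requiring a genuine idea. Suppose $s = a_1 \oplus \cdots \oplus a_k \ne 0$. Let $2^m$ be the highest power of two appearing in $s$. Some pile $a_i$ has a $1$ in bit position $m$; otherwise that bit could not appear in the XOR. For that pile set $a_i' = a_i \oplus s$. I claim $a_i' < a_i$: the two numbers agree in all bit positions above $m$, and in position $m$ the pile $a_i$ has a $1$ while $a_i' = a_i \oplus s$ has a $0$ (since $s$ has a $1$ there), so $a_i'$ is strictly smaller. Replacing $a_i$ by $a_i'$ is a legal Nim move, and the resulting nim-sum is $s \oplus a_i \oplus a_i' = s \oplus a_i \oplus (a_i \oplus s) = 0$, so the new position lies in $Z$.

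Finally I would assemble these facts with the terminal-position observation into the inductive argument sketched above to conclude that $Z$ is precisely the set of P-positions. The routine parts are the bookkeeping with $\oplus$; the one substantive point to present carefully is the choice of the pile to modify via the leading bit of the nim-sum and the verification that $a_i \oplus s < a_i$.
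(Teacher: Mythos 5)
Your proof is correct. Note, however, that the paper does not actually prove this theorem---it simply cites Bouton's original article and \emph{Winning Ways} and moves on---so there is no in-paper argument to compare against; what you have written is the standard proof found in those references (every move from a nim-sum-zero position breaks the zero sum, and from a nonzero sum $s$ one cancels $s$ by reducing a pile whose binary expansion contains the leading bit of $s$), and both halves, including the key verification that $a_i \oplus s < a_i$, are carried out correctly.
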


The proof can be found in many places \cite{BCG, Bouton}. We just want to mention that Cookie Monster is very perceptive. He noticed that P-position (2,4,6) is the doubling of each pile of the P-position (1,2,3). By the nim-sum property this is always true: doubling each element in a P-position will result in a P-position. 

Another observation of Cookie Monster was that if someone removes one match from one of the piles of the starting position (1,3,5,7), then to get to the next P-position he needs to remove one match from any other pile. This strategy works for any starting P-position where all piles are odd. 

Note the following corollary:

\begin{cor} 
Given the number of matches in all but one of the piles, there is a unique value of the number of matches in that pile that makes the position a P-position.
\end{cor}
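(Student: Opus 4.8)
The plan is to derive this corollary directly from Bouton's theorem. Suppose the piles other than the distinguished one contain $a_1, a_2, \ldots, a_{k-1}$ matches, and let $s = a_1 \oplus a_2 \oplus \cdots \oplus a_{k-1}$ be their nim-sum. By Bouton's theorem, a position $(a_1, \ldots, a_{k-1}, x)$ is a P-position if and only if $a_1 \oplus \cdots \oplus a_{k-1} \oplus x = 0$, i.e. $s \oplus x = 0$. The key algebraic facts I would invoke are that $\oplus$ is associative and commutative (stated in the excerpt) and that every number is its own inverse under $\oplus$, so that $a \oplus a = 0$ and $a \oplus 0 = a$. From these, $s \oplus x = 0$ forces $x = x \oplus 0 = x \oplus (s \oplus s) = (s \oplus x) \oplus s = 0 \oplus s = s$; conversely $x = s$ clearly gives $s \oplus x = s \oplus s = 0$. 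Hence $x = s$ is the one and only value making the position a P-position.

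First I would state explicitly the involution property $a \oplus a = 0$, noting it follows immediately from the ``cancel each power of two appearing twice'' description of nim-sum already given in the text. Second, I would fix notation for the $k-1$ known pile sizes and their nim-sum $s$. Third, I would invoke Bouton's theorem to translate ``P-position'' into the equation $s \oplus x = 0$. Fourth, I would solve that equation: existence is witnessed by $x = s$, and uniqueness follows by nim-adding $s$ to both sides of $s \oplus x = 0$ and using associativity plus the involution property. I might also remark that this value $x = s$ is automatically a nonnegative integer, so it is always a legal pile size, which is why the corollary has no exceptional cases.

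I do not anticipate a genuine obstacle here; the statement is essentially a restatement of the fact that $(\mathbb{Z}_{\geq 0}, \oplus)$ behaves like a vector space over $\mathbb{F}_2$ where each coordinate has a unique solution. The only thing to be mildly careful about is not to conflate ``pile of matches'' (which must be a nonnegative integer, and in a genuine game position a positive integer if we insist piles be nonempty) with the purely arithmetic value $s$; I would phrase the corollary's ``value of the number of matches'' as allowing $0$, matching the convention in the excerpt that the all-empty position is a P-position. If one instead wants all piles strictly positive, the same argument shows the value is unique when it is positive and shows no P-position exists with those fixed piles when $s = 0$, but I expect the intended reading is the inclusive one, so I would just remark on this in a sentence rather than splitting into cases.
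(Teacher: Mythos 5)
Your argument is correct and is exactly the paper's approach, expanded: the paper's entire proof is the one-line observation that the last pile must be the nim-sum of the others, and your write-up simply supplies the routine verification (via Bouton's theorem and the involution property of $\oplus$) that this value exists and is unique. No issues.
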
 

\begin{proof}
The last pile is the nim-sum of the other piles.
\end{proof}

\section{Cookie Monster Game with 2 Jars}\label{twojars}

Cookie Monster played the game of Nim with his friends and always won. But then he said to himself, ``There is a Cookie Monster problem, there should be a Cookie Monster game.''

Here is how the Cookie Monster problem is converted to a two-player game. There are several jars filled with cookies. In one move a player chooses any subset of jars and takes the same number of cookies from each of those jars. The player who cannot move loses.

Cookie Monster decided to study his own game. If there is one jar and he starts he can win with one move by taking all the cookies. What happens if there are two jars? 

If it were Nim with two jars, then the same number of cookies in both jars $(n,n)$ would constitute a P-position. But in the Cookie Monster game you can empty those two jars in one move, so this must be an N-position.

Cookie Monster calculated some small P-positions: $(1,2)$, $(3,5)$, $(4,7)$, $(6,10)$ and $(8,13)$. Some of them are pairs of consecutive Fibonacci numbers. Then he remembered where he saw other pairs. This is how he converts miles to kilometers.

Suppose you want to convert miles to kilometers. Take the number of miles, represent it as a sum of different Fibonacci numbers, then replace each Fibonacci number with the next one and sum the numbers to get your conversion. For example, 6---as a sum of different Fibonacci numbers---is $5+1$. Replacing each number by the next Fibonacci number, he finds that the new sum is $8+2=10$. The reverse conversion is similar: you just need to take the previous Fibonacci number instead of the next one.

\subsection{Authors' comments}

The Cookie Monster game with two jars was invented in 1907 and is called Wythoff's Game. The Cookie Monster game for more than two jars first appeared in literature more than 100 years later in 2013 \cite{B}.

Cookie Monster's method of converting miles to kilometers involves the Zeckendorf representation of a positive integer. To find the \textit{Zeckendorf representation}, take an integer $n$, subtract the largest Fibonacci number not greater than $n$, and repeat. As a result, $n$ is represented as a sum of $j$ distinct Fibonacci numbers: $n= F_{i_1} + F_{i_2} + \ldots + F_{i_j}$, where $i_1 < i_2 < \ldots < i_j$. If one of the Fibonacci numbers used is $1$, its index is defined to be $2$. In fact, there are no two neighboring Fibonacci numbers in this representation \cite{Z}. By construction, this representation is unique. Now we define the \textit{Fibonacci successor} of $n$, $\sigma(n)$, by shifting the index of Fibonacci numbers in the Zeckendorf representation \cite{CF}: 

$$
\sigma(n) = F_{i_1+1} + F_{i_2+1} + \ldots + F_{i_j+1}.
$$

The trick of converting miles to kilometers is based on the fact that the ratio of kilometers to miles is 1.609, which is very close to the golden ratio, 1.618. The ratio $F_{n+1}/F_n$ is very close to the golden ratio for $n \geq 5$. Thus for $n > 10$, the Fibonacci successor $\sigma(n)$ is  very close to $1.6n$. This justifies the miles to kilometers conversion. 

In order to reverse the conversion, converting kilometers to miles, we can use the same principle, but in reverse. However, this does not quite work because not every number is a successor of another number. Only numbers that do not have $1$ in their Zeckendorf representations are successors. But we still can convert kilometers to miles. If the number of kilometers has 1 in its Zeckendorf representation we can either keep it or replace by 0. In an approximate calculation, it does not matter.

The following well-known theorem describes P-positions in the Wythoff's Game (see \cite{Co, Wy}).

\begin{theorem}
All P-positions can be described in the form $(n,\sigma(n))$. Every positive integer appears in one P-position. If the index $i_1$ of the smallest Fibonacci number in the Zeckendorf representation of $n$ is even, then $n$ is the first number, otherwise it is the second number.
\end{theorem}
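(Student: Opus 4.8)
The plan is to identify the set described in the theorem directly, using the abstract characterization of $P$-positions recalled in Section~\ref{nim}: a set $\mathcal{P}$ of positions coincides with the set of $P$-positions exactly when (a)~the unique terminal position $(0,0)$ lies in $\mathcal{P}$, (b)~every legal move from a position in $\mathcal{P}$ ends outside $\mathcal{P}$, and (c)~from every position not in $\mathcal{P}$ there is a legal move ending inside $\mathcal{P}$. So I would let $\mathcal{P}$ consist of $(0,0)$ together with all pairs $(n,\sigma(n))$ in which the smallest Fibonacci number in the Zeckendorf representation of $n$ has even index (since $\sigma(n)>n$ for $n\ge 1$, these are genuinely written with the smaller entry first), and then check (a)--(c). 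By the uniqueness of the set satisfying (a)--(c), this simultaneously shows that the $P$-positions are exactly the pairs $(n,\sigma(n))$, that every positive integer lies in exactly one of them, and the stated parity rule for which coordinate it occupies.

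Before touching the game I would record three facts about $\sigma$ and Zeckendorf representations. First, the positive integers whose smallest Zeckendorf index is even (call this set $A$) and those whose smallest index is odd (set $B$) partition the positive integers; this is immediate from the uniqueness of the Zeckendorf representation. Second, since $\sigma$ raises every index by one, it is strictly increasing, fixes $0$, satisfies $\sigma(n)>n$ for $n\ge 1$ and $\sigma(\sigma(n))=\sigma(n)+n$ (the Fibonacci recursion applied term by term), and restricts to an order-preserving bijection $A\to B$; surjectivity holds because every element of $B$ has smallest index at least $3$, so lowering all of its indices by one yields a legitimate Zeckendorf representation of an element of $A$. Third, and this is the substantive one: if $\alpha_1<\alpha_2<\cdots$ enumerates $A$, then $\sigma(\alpha_j)=\alpha_j+j$; equivalently, as $n$ runs through $A$ the differences $\sigma(n)-n$ run through $1,2,3,\dots$, each value once and in increasing order. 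From the first two facts, every positive integer lies in exactly one pair of $\mathcal{P}$, with the elements of $A$ occurring as smaller coordinates and those of $B$, through $\sigma$, as larger coordinates --- this already gives the last two sentences of the theorem. From the third fact, the assignment $(a,b)\mapsto b-a$ is a bijection from $\mathcal{P}\setminus\{(0,0)\}$ onto the positive integers.

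Granting these, (b) and (c) are a short case analysis over Wythoff's three types of move. If a move changes exactly one coordinate of a pair $(a,b)\in\mathcal{P}$, the unchanged coordinate still determines a unique pair of $\mathcal{P}$, which forces the other coordinate back to where it started, a contradiction; if a move subtracts the same positive amount from both coordinates, the difference $b-a$ is unchanged, and uniqueness of the difference forces the zero move, again a contradiction (the degenerate landings on $(0,0)$ are impossible because $a<b$ there). This gives (b). For (c), take $(x,y)$ with $x\le y$ not in $\mathcal{P}$: if $x=0$, remove all of the other pile to reach $(0,0)$; if $x\in A$ and $y>\sigma(x)$, trim the larger pile down to $\sigma(x)$ to reach $(x,\sigma(x))$; if $x\in B$, trim the larger pile down to the unique $A$-partner $c=\sigma^{-1}(x)$ to reach $(c,x)=(c,\sigma(c))$; and in the remaining case, $x\in A$ with $x\le y<\sigma(x)$, subtract the same amount from both piles to reach the unique pair whose difference is $y-x$, which is reachable because $y-x$ is less than $\sigma(x)-x$, the position of $x$ within $A$. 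The main obstacle is the third Zeckendorf fact, $\sigma(\alpha_j)=\alpha_j+j$: the first two are essentially bookkeeping, but this one is the real Fibonacci input. I would prove it either by induction along the blocks $[F_m,F_{m+1})$, counting in each block the integers with even smallest index and matching the running totals against the index shift that $\sigma$ performs, or --- in the spirit of the miles-to-kilometres heuristic --- by identifying $A$ with the lower Wythoff sequence $\{\lfloor j\varphi\rfloor\}_{j\ge 1}$ ($\varphi$ the golden ratio) and $\sigma(\alpha_j)$ with $\lfloor j\varphi^2\rfloor=\lfloor j\varphi\rfloor+j$, which reduces the claim to Beatty's theorem once the dictionary between Zeckendorf representations and Beatty sequences is in place.
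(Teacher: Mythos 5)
The paper itself does not prove this theorem; it only cites Coxeter and Wythoff, so there is no in-paper argument to compare against. Judged on its own, your proposal is a correct and essentially standard self-contained proof. The overall architecture is right: you verify the three defining properties of the P-position set for your candidate family, and the move analysis in steps (b) and (c) is complete --- the single-pile moves are killed by the fact that each positive integer occupies a unique slot in a unique pair, the diagonal moves by the fact that the differences $\sigma(n)-n$ are pairwise distinct, and every N-position reaches $\mathcal{P}$ by one of your four cases (including the degenerate $y=x$ landing on $(0,0)$). Your small sanity checks even match the pairs $(1,2)$, $(3,5)$, $(4,7)$, $(6,10)$, $(8,13)$ listed in Section~\ref{twojars}. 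The one place where the proof is not yet complete is exactly where you say it is: the identity $\sigma(\alpha_j)=\alpha_j+j$ carries essentially all of the number-theoretic content, and as written it is asserted with two proposed strategies rather than proved. Both strategies do work; the Beatty-sequence route amounts to showing that $A$ is the lower Wythoff sequence $\{\lfloor j\varphi\rfloor\}$ and $\sigma(\alpha_j)=\lfloor j\varphi^2\rfloor$, which is the classical form of the theorem in \cite{Co,Wy}, while the block-by-block induction over $[F_m,F_{m+1})$ is more elementary but requires careful counting of how many elements of $A$ precede a given one. To make the write-up complete you would need to carry out one of these in full; everything else in your argument is bookkeeping that follows from uniqueness of the Zeckendorf representation.
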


Cookie Monster was correct in observing that some P-positions involved consecutive Fibonacci numbers. The corollary describes exactly which Fibonacci pairs are P-positions.

\begin{cor}
$(F_{2n}, F_{2n+1})$ is a P-position, and no other P-positions involve Fibonacci numbers.
\end{cor}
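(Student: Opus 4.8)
The plan is to read off the corollary directly from the preceding Theorem, which characterizes every P-position of Wythoff's game as a pair $(n,\sigma(n))$ where the Zeckendorf representation of $n$ has even smallest index $i_1$ (equivalently, $\sigma(n)$ has odd smallest index, and $n<\sigma(n)$). So the whole task reduces to asking: for which such $n$ is either $n$ or $\sigma(n)$ a Fibonacci number? I would split into those two cases.

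In the first case, suppose $n$ itself is a Fibonacci number, $n=F_k$. Then its Zeckendorf representation is the single term $F_k$, so $i_1=k$; the requirement that $i_1$ be even forces $k=2m$, and then $\sigma(n)=F_{2m+1}$. Hence the P-position is $(F_{2m},F_{2m+1})$, a pair of consecutive Fibonacci numbers. Running the argument backwards, every $(F_{2m},F_{2m+1})$ with $m\ge 1$ does arise from a valid $n=F_{2m}$ (whose smallest index $2m$ is even), so all of these pairs are genuinely P-positions — this gives the ``forward'' half of the statement.

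In the second case, suppose $\sigma(n)$ is a Fibonacci number. The one real observation needed is that $\sigma$ preserves the number of terms in the Zeckendorf representation: shifting every index up by one keeps the index set strictly increasing and still free of two consecutive integers, so $\sigma(n)$ has exactly as many Zeckendorf terms as $n$. Therefore, if $\sigma(n)=F_k$ has a single term, then $n=F_{k-1}$ has a single term, and the parity condition $i_1=k-1$ even forces $k-1=2m$, landing us again on the P-position $(F_{2m},F_{2m+1})$. Combining the two cases shows that the P-positions involving a Fibonacci number are precisely the $(F_{2m},F_{2m+1})$.

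I do not expect a hard step here; the corollary is essentially a one-line consequence of the Theorem together with the term-count invariance of $\sigma$. The only things to be careful about are bookkeeping conventions: the index of $F_1=1$ is taken to be $2$, so ``$F_k$ with $k$ even'' legitimately includes $1=F_2$ and produces the P-position $(1,2)$; and $n$ must be a positive integer, so $m\ge 1$ (the degenerate pair $(F_0,F_1)=(0,1)$ is an N-position, not a P-position, since one can move from it to $(0,0)$).
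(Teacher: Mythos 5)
Your proof is correct. The paper offers no proof of this corollary (it is presented as an immediate consequence of the preceding theorem), and your argument is the natural fleshing-out: the only nontrivial ingredient is that $\sigma$ preserves the number of terms in the Zeckendorf representation, which you justify correctly, and you handle the index convention for $1=F_2$ and the degenerate pair $(0,1)$ appropriately.
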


\section{Cookie Monster Game}\label{cmgame}

Cookie Monster started studying his own game with 3 jars. He wrote a program and found some P-positions. The P-positions with one empty jar are the same as in the Wythoff game. Here some more P-positions with all the jars non-empty and each jar has less than 10 cookies: $(1, 1, 4)$, $(1, 3, 3)$, $(1, 5, 6)$, $(2, 2, 6)$, $(2, 3, 8)$, $(2, 7, 7)$, $(3, 4, 4)$, $(3, 6, 9)$, $(5, 5, 7)$, $(5, 8, 8)$. As jars are permutable, we only need to write one of the permutations.

It was difficult to calculate these positions, and Cookie Monster went online to try to find some literature on the subject, and found only one paper. M.~Belzner~\cite{B} already studied the Cookie Monster game with 3 jars. She tried to calculate P-positions for the case when one of three jars contains 1 cookie and the other two are not empty. But the problem is so difficult that she made a mistake. She listed $(1,7,9)$ as a P-position. Cookie Monster can prove that $(1,7,9)$ is not a P-position. He can take 7 cookies from the last two jars (and eat all 14 of them) to get to $(0,1,2)$, which is a  P-position. Consequently, all the following P-positions in~\cite{B} are wrong. The author probably was solving this manually. It is good that Cookie Monster can program.

\subsection{Authors' comments}

To our knowledge, \cite{B} is the only paper studying P-positions of the Cookie Monster game with more than 2 jars. That makes Cookie Monster the first person to correctly calculate the P-positions in this game.

\section{Nim and the First Two Jars}\label{sumofgames}

Cookie Monster realized how difficult the problem is and decided to invent other games. In Nim, each move involved one pile. In the Cookie Monster game, each move involves any subset of the piles. So Cookie Monster decided to do something in between. Each game will have \emph{permissible sets of jars} that the players can take from. In Nim, only sets of size 1 are permissible. In the Cookie Monster game, every subset of jars is permissible. His new games are between Nim and Cookie Monster. So all sets of size 1 are permissible, but not all subsets are permissible. The games can be generated by adding permissible sets to Nim or subtracting permissible sets from the Cookie Monster game.

Cookie Monster realized that there were many different games that he could make, so he invented a common name for them. He decided to call these new games \emph{Cookie-Monster-Nim games}, or \emph{CM-Nim games} for short.

Cookie Monster first decided to add exactly one set of jars from which he allowed to take the same number of cookies, since he thought adding fewer moves would make it easier to find P-positions.

First he tries to play the following game: You are allowed to take any number of cookies from individual jars. You are also allowed take the same number of cookies from both the first and the second jar. The P-positions with one empty jar will either be the same as Nim or as Wythoff P-positions, depending on which jar is empty. So he wrote a program to calculate some more P-positions where none of the jars is empty. He calculated the P-positions were none of the jars has more than six cookies: as the first and the second jars are interchangeable, we only need to write the positions such that the number of cookies in the first jar is not greater than the number in the second jar. These are the P-positions: $(1, 1, 2)$, $(1, 3, 4)$, $(1, 4, 5)$, $(1, 5, 3)$, $(2, 2, 1)$, $(2, 3, 5)$, $(2, 4, 3)$, $(2, 5, 4)$, $(3, 3, 6)$, $(3, 4, 2)$, $(3, 6, 1)$, $(4, 5, 6)$, $(6, 6, 3)$. 

Once again Cookie Monster went online and discovered that this game is actually known: it is the sum of two games: Wythoff's game plus Nim with one jar. So Cookie Monster lost interest in it.

\subsection{Authors' Comments}

The \textit{sum} of two given games is defined as follows: Two players are playing two games against one another. On each move a player decides which game to play and makes one move.

To find the P-positions of this game, we use the following theorem that was discovered about all impartial combinatorial games.

\begin{theorem}[Sprague-Grundy Theorem]
Every position of any impartial game is equivalent to a Nim heap.
\end{theorem}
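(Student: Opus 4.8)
The plan is to prove this via the notion of a \emph{Grundy value}. Assume, as is standard, that every play of the game terminates, so that positions are well-founded under the ``is an option of'' relation and we may argue by induction. For a set $S$ of non-negative integers write $\operatorname{mex}(S)$ for its \emph{minimum excludant}, the least non-negative integer not in $S$. Define the Grundy value $g(G)$ of a position $G$ recursively by $g(G) = \operatorname{mex}\{\, g(G') : G' \text{ is an option of } G \,\}$, so that $g$ of a terminal position is $0$ and, by an immediate induction, $g$ of the Nim heap $(n)$ is $n$. Recalling that impartial games $G$ and $H$ are \emph{equivalent} when $G + X$ and $H + X$ have the same outcome for every impartial game $X$, the theorem amounts to the assertion that $G$ is equivalent to the Nim heap $(g(G))$.

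First I would record two elementary consequences of the definition of $\operatorname{mex}$, each a short induction on $G$: (a) for every $m < g(G)$ some option of $G$ has Grundy value $m$; and (b) no option of $G$ has Grundy value $g(G)$. A third short induction shows that $g(G) = 0$ if and only if $G$ is a P-position: if $g(G)=0$ then every option has nonzero Grundy value, hence by induction is an N-position, so $G$ is a P-position, and conversely.

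The heart of the argument is the identity $g(G + H) = g(G) \oplus g(H)$, which I would prove by induction on the pair $(G, H)$. Writing $n = g(G)$, $m = g(H)$, and $s = n \oplus m$, the inequality $g(G+H) \le s$ holds because every option of $G + H$ has the form $G' + H$ or $G + H'$, and by (b) together with the inductive hypothesis $g(G' + H) = g(G') \oplus m \neq n \oplus m = s$, and symmetrically in $H$; so $s$ is not among the Grundy values of the options. For $g(G+H) \ge s$, given any $t < s$, set $d = s \oplus t$; the most significant bit in which $s$ and $t$ differ is set in $s$, hence in exactly one of $n$ and $m$, say in $n$, and then $n \oplus d < n$. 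By (a) there is an option $G'$ of $G$ with $g(G') = n \oplus d$, and by the inductive hypothesis $g(G' + H) = (n \oplus d) \oplus m = s \oplus d = t$. Thus every value below $s$ is attained among the options, and $g(G + H) = s$.

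Combining everything, for any impartial $X$ we get that $G + X$ is a P-position $\iff g(G + X) = 0 \iff g(X) = g(G) \iff g\bigl((g(G)) + X\bigr) = 0 \iff (g(G)) + X$ is a P-position; since in every other case both are N-positions, $G + X$ and $(g(G)) + X$ always have the same outcome, so $G$ is equivalent to $(g(G))$. I expect the one genuinely delicate point to be the bit-manipulation step in the lower bound $g(G+H) \ge s$ --- verifying that the relevant bit of $s$ lies in $n$ or $m$ and that $n \oplus d < n$ --- which is precisely the combinatorial core and in fact contains Bouton's theorem as the special case where all summands are Nim heaps; everything else is bookkeeping, modulo the standing assumption that the game is finite. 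It is worth remarking that the sum formula immediately yields a practical tool: the Grundy value of any sum of impartial games is the nim-sum of the Grundy values of its parts, which is exactly what is needed to describe the P-positions of the game in Section~\ref{sumofgames}.
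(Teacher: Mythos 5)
Your proof is correct, but note that the paper does not actually prove this theorem: it explicitly omits the proof as ``fairly technical'' and defers to the original papers of Grundy and Sprague, so there is no in-paper argument to compare yours against. What you have written is the standard textbook proof --- define the Grundy value by $g(G) = \operatorname{mex}\{g(G')\}$ over the options $G'$, show $g(G)=0$ exactly at P-positions, establish the sum rule $g(G+H)=g(G)\oplus g(H)$ by the two-sided mex argument, and conclude equivalence of $G$ with the Nim heap $(g(G))$ --- and all the steps check out, including the delicate bit-manipulation in the lower bound (the leading bit of $d = s\oplus t$ is set in $s$, hence in exactly one of $n$, $m$, and XORing $d$ into that summand strictly decreases it). Two small cosmetic points: your facts (a) and (b) are immediate from the definition of $\operatorname{mex}$ and need no induction on $G$; and the ``conversely'' in the claim that $g(G)=0$ characterizes P-positions deserves one more sentence (if $g(G)\neq 0$ then by (a) some option has Grundy value $0$, hence is a P-position by induction, so $G$ is an N-position). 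Your closing remark is also apt: the sum formula is precisely what the paper invokes, without proof, to analyze the game of Section~\ref{sumofgames} as Wythoff plus a Nim heap.
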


The proof of this theorem \cite{Grundy, Sprague} is fairly technical, so we will omit it here. Accordingly, there is a Sprague-Grundy function, which takes a position of an impartial game and gives the equivalent Nim heap, which is called a nimber.

The nimber equivalent to the sum of several games is the nim-sum of the nimbers of the positions in each game. In order for this sum of games to be a P-position, the nim-sum of the nimbers of each individual game must be equal to zero. 

Cookie Monster cannot take from both the last jar and at least one of the first two in his new game. So this game is equivalent to the sum of Wythoff's game---the first two jars---and Nim with one jar---the third jar. 

The Sprague-Grundy function of Nim with one jar is, of course, the nimber of the same size. The Sprague-Grundy function of the Wythoff game is more complicated, and no explicit formula is known, although many properties have been deduced \cite{BF}. In P-positions in Cookie Monster's new game, the number of cookies in the third jar is exactly the Sprague-Grundy function of Wythoff's game with the first two jars.

\section{Nim and All Three Jars}\label{oddgame}

Cookie Monster tried a different game: the one where as an additional move you are allowed to take the same number of cookies from all the three jars. He calls this the \emph{odd CM-Nim game} as you always take from an odd number of jars. He discovered that the P-positions are exactly the same as in the game of Nim. How could this be? If we add more moves should the number of P-positions decrease? Actually there are infinitely many P-positions, but still he feels that the number of P-positions should decrease in some sense.

\subsection{Authors' Comments}

The notion of an odd CM-Nim game can be extended to any number of jars. In one move, a player is allowed to take the same number of cookies from an odd number of jars. Cookie Monster was correct in thinking that P-positions of an odd game are the same as P-positions of Nim.

\begin{theorem}
The odd CM-Nim game with $k$ jars has the same P-positions as Nim with $k$ piles.
\end{theorem}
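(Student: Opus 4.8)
The plan is to show that the set $P$ of positions whose nim-sum is zero is precisely the set of P-positions of the odd CM-Nim game; by Bouton's Theorem this is the same as the set of P-positions of Nim with $k$ piles. I would invoke the standard criterion: a set $P$ of positions is the set of P-positions if and only if (i) the terminal position $(0,\dots,0)$ lies in $P$, (ii) every legal move from a position in $P$ lands in a position not in $P$, and (iii) from every position not in $P$ there is a legal move into $P$. Condition (i) is immediate, since $(0,\dots,0)$ has nim-sum $0$. Condition (iii) is free: a single-jar move is legal in the odd CM-Nim game (because $1$ is odd), and Bouton's Theorem already supplies such a move, decreasing one pile, that makes the nim-sum zero. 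So the entire content of the proof is condition (ii).

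For (ii) the key observation --- and the heart of the argument --- is a bit-level lemma about subtraction: if $d$ is a positive integer and $t=v_2(d)$ is its $2$-adic valuation (so $d=2^t s$ with $s$ odd), then for any integer $a\ge d$ the numbers $a$ and $a-d$ have the same bits in positions $0,1,\dots,t-1$ but opposite bits in position $t$ (with the convention that position $0$ is the least significant bit). The low bits are handled by $a-d\equiv a \pmod{2^t}$. For position $t$, write $a=2^t q + r$ with $0\le r<2^t$; then $a-d=2^t(q-s)+r$ with $q-s\ge 0$, so the bit in position $t$ of $a-d$ is $(q-s)\bmod 2=(q+1)\bmod 2$, the negation of the bit $q\bmod 2$ in position $t$ of $a$.

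Now I would finish (ii) as follows. A legal move selects an odd-size subset $S$ of the jars and a positive integer $d$ with $d\le a_i$ for all $i\in S$, and replaces each $a_i$ ($i\in S$) by $a_i-d$, leaving the other jars unchanged. Set $t=v_2(d)$. By the lemma, every $a_i$ with $i\in S$ has its bit in position $t$ flipped, while the jars outside $S$ keep their bit in position $t$. Hence the bit in position $t$ of the new nim-sum differs from that of the old nim-sum by exactly $|S|\bmod 2 = 1$. Since the old nim-sum is $0$, the new nim-sum has a $1$ in position $t$ and is therefore nonzero, so the move lands outside $P$. Combining (i)--(iii), the P-positions of the odd CM-Nim game are exactly the nim-sum-zero positions, i.e., the Nim P-positions.

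The only real obstacle is the subtraction lemma, because binary subtraction involves borrows that can propagate unpredictably through the high-order bits; what makes it tractable is that no borrow can reach below position $t=v_2(d)$, and at position $t$ itself the effect is a clean flip regardless of the high-order structure of $a$. Bits above position $t$ are irrelevant to us, since the parity of the number of flips at position $t$ (which is odd, as $|S|$ is odd) already forces the nim-sum to change. I would also note that this argument works verbatim for any odd number of jars, which is exactly the generality claimed in the theorem.
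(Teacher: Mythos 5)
Your proposal is correct and follows essentially the same route as the paper: identify the candidate P-positions as the nim-sum-zero positions, show that every legal odd-subset move destroys the zero nim-sum, and note that the Nim move from an N-position to a P-position is still available since single-jar moves are permissible. The only difference is one of detail: the paper simply asserts that ``one move cannot preserve'' the zero nim-sum, whereas you actually prove it via the $2$-adic valuation lemma (the bit in position $v_2(d)$ flips in each of the oddly many affected piles, so that bit of the nim-sum becomes $1$), which is a genuine and welcome strengthening of the exposition rather than a different argument.
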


\begin{proof}
If the nim-sum of all the piles is zero, then one move cannot preserve this property. That is any move from a P-position of Nim goes to an N-position of Nim. At the same time, in Nim there exists a move from an N-position to a P-position. The same move exists in the odd Cookie Monster game. That means the P-positions of the odd Cookie Monster game are the same as P-positions of Nim.
\end{proof}

Using similar reasoning, we can prove the following theorem:

\begin{theorem}
If you add moves to a game that only move from the old game's P-positions to the old game's N-positions, the new game has the same set of P-positions as the old game.
\end{theorem}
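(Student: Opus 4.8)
The plan is to invoke the standard recursive characterization of P-positions, which is valid for any impartial game in which every sequence of moves terminates --- and this holds for all the games considered here, since each move strictly decreases the total number of cookies (so the game is \emph{short}). Recall, as emphasized in the authors' comments on Nim, that a set $\mathcal{P}$ of positions containing every terminal position is exactly the set of P-positions of such a game if and only if (i) no move leads from a position in $\mathcal{P}$ to a position in $\mathcal{P}$, and (ii) from every position outside $\mathcal{P}$ there is at least one move to a position in $\mathcal{P}$. These two properties determine the P-positions uniquely, by backward induction on the length of the longest play from a position.

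Next I would fix notation: let $G$ be the old game, with P-positions $\mathcal{P}$ and N-positions $\mathcal{N}$, and let $G'$ be the new game, whose move set is that of $G$ together with the added moves, each of which (by hypothesis) goes from a position in $\mathcal{P}$ to a position in $\mathcal{N}$. I then check (i) and (ii) for the candidate set $\mathcal{P}$ in $G'$. For (i): a move of $G'$ out of a position in $\mathcal{P}$ is either a move of $G$, which lands in $\mathcal{N}$ because $\mathcal{P}$ is the P-position set of $G$, or one of the added moves, which lands in $\mathcal{N}$ by hypothesis; in either case it does not land in $\mathcal{P}$. For (ii): a position in $\mathcal{N}$ already has a move of $G$ into $\mathcal{P}$, and that move is still legal in $G'$. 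Since $G'$ has the same terminal positions as $G$, and those lie in $\mathcal{P}$, the characterization yields that $\mathcal{P}$ is precisely the set of P-positions of $G'$.

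The only point needing care is the well-foundedness that makes the recursive characterization legitimate in the first place; once that is granted, the proof reduces to the two verifications above, and the previous theorem on the odd CM-Nim game becomes the special case in which the added moves are ``take the same number of cookies from all $k$ jars.'' I would also remark that the full strength of the hypothesis is not used: all that is really needed is that no added move goes from a position of $\mathcal{P}$ to a position of $\mathcal{P}$. Conceptually this is why enlarging the move set cannot change the P-positions here --- condition (ii) only becomes easier to satisfy when moves are added, while condition (i) is preserved exactly by the hypothesis.
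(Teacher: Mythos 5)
Your proof is correct and follows essentially the same route as the paper, which gives no separate argument for this theorem but simply notes it follows "using similar reasoning" to the odd CM-Nim case: verify that all moves (old and added) out of the old P-set land in the old N-set, and that every old N-position retains its move into the old P-set. Your explicit attention to well-foundedness and the remark that only "no added move goes from a P-position to a P-position" is needed are accurate refinements of the same argument.
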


\section{Other Games with 3 Jars}\label{othergames}

What other moves could Cookie Monster add to the game of Nim or subtract from the Cookie Monster game to invent other new games? Not to miss anything, Cookie Monster first looks at the games when you can take from at most two jars. In addition to taking from one jar at a time, you are allowed to take from two jars. It produces two more games. For each game there is a list of permissible sets. To differentiate permissible sets of jars from P-positions, Cookie Monster uses curly brackets for the sets of jars.

\begin{itemize}
\item \textbf{At-Most-2-Jars}: From any two jars: $\{1,2\}$, $\{1,3\}$, and $\{2,3\}$. 
\item \textbf{Consecutive-At-Most-2-Jars}: $\{1,2\}$ and $\{2,3\}$.
\end{itemize}

What if the jars are not consecutive? Cookie Monster can relabel the jars any way he wants, so the game with permissible sets $\{1,2\}$ and $\{1,3\}$ is the same as Consecutive-At-Most-2-Jars.

Now Cookie Monster looks at the games where you are allowed to take from all three jars:
\begin{itemize}
\item \textbf{Consecutive-Include-First-Jar}: $\{1,2\}$, and $\{1,2,3\}$.
\item \textbf{Consecutive}: $\{1,2\}$, $\{2,3\}$ and $\{1,2,3\}$.
\end{itemize}

It is easy to calculate P-positions when one of the jars is empty because there are two jars left, and the game reduces to either Nim or Wythoff. Cookie Monster decides to compare the games by finding P-positions with 2 ones. He arranged them in Table~\ref{tbl:twoones}.

\begin{table}[htbp]
\begin{center}
  \begin{tabular}[htbp]{| l | r |}
    \hline
    Nim & (0,1,1) (1,0,1) (1,1,0) \\ \hline
    Wythoff plus Nim & (0,1,1) (1,0,1) (1,1,2)\\ \hline
    Odd Cookie Monster & (0,1,1) (1,0,1) (1,1,0) \\ \hline
    Cookie Monster & (4,1,1) (1,4,1) (1,1,4) \\ \hline
    At-Most-2-Jars & (1,1,1) \\ \hline
    Consecutive-At-Most-2-Jars & (3,1,1) (1,0,1) (1,1,3) \\ \hline
    Consecutive-Include-First-Jar & (0,1,1) (1,0,1) (1,1,2) \\ \hline
    Consecutive & (3,1,1) (1,0,1) (1,1,3) \\ 
    \hline
  \end{tabular}
\end{center}
\caption{P-positions in CM-Nim games with 2 ones}\label{tbl:twoones}
\end{table}

Cookie Monster is a proud programmer. He calculated more P-positions in hopes of finding patterns. These are the P-positions that he found where each jar is non-empty and has at most 6 cookies:

\begin{itemize}
\item At-Most-2-Jars.  P-positions are sorted as the order does not matter;: $(1, 1, 1)$, $(1, 3, 4)$, $(2, 2, 2)$, $(2, 3, 6)$, $(2, 5, 7)$, $(3, 3, 3)$, $(4, 4, 4)$, $(5, 5, 5)$, $(6, 6, 6)$.
\item Consecutive-At-Most-2-Jars. We can switch the first and the third jars in the P-positions, so the list has only P-positions, where the first jar has no more cookies than the last jar: $(1, 1, 3)$, $(1, 3, 2)$, $(1, 4, 4)$, $(2, 2, 5)$, $(2, 6, 3)$, $(2, 5, 6)$, $(3, 2, 4)$, $(3, 4, 5)$, $(4, 1, 6)$, $(4, 6, 5)$.
\item Consecutive-Include-First-Jar: Order matters in this case. $(1, 1, 2)$, $(1, 3, 4)$, $(1, 4, 5)$, $(1, 5, 3)$, $(2, 2, 1)$, $(2, 3, 5)$, $(2, 4, 3)$, $(2, 5, 4)$, $(3, 1, 4)$, $(3, 2, 5)$, $(3, 3, 6)$, $(3, 6, 1)$, $(4, 1, 5)$, $(4, 2, 3)$, $(4, 5, 2)$, $(5, 1, 3)$, $(5, 2, 4)$, $(5, 4, 2)$, $(6, 3, 1)$, $(6, 6, 3)$.
\item Consecutive. In this case, the first and the third jars are interchangeable, so the list has only P-positions where the first jar does not have more cookies than the last jar: $(1, 1, 3)$, $(1, 3, 6)$, $(1, 4, 4)$, $(1, 5, 2)$, $(1, 6, 5)$, $(2, 2, 5)$, $(2, 3, 3)$, $(2, 6, 4)$, $(3, 2, 4)$, $(4, 1, 6)$, $(5, 5, 6)$.
\end{itemize}

After staring at the P-positions, Cookie Monster notices only one pattern: In the game of At-Most-2-Jars with 3 jars, $(n,n,n)$ is always a P-position.

\subsection{Authors' comments}

Cookie Monster's observations about the At-Most-2-Jars game are correct. In fact, this game is a specific case of another game: the game with $k$ jars called \textit{All-but-$k$}, where a player can take any number of cookies from any subset of jars except from all of the $k$ jars.

\begin{theorem}
In the All-but-$k$ game, $(n,n,\ldots,n)$ is a P-position for any $n$. Further, if a position has $|\{a_1, \ldots, a_k\}| = 2$, it is an N-position.
\end{theorem}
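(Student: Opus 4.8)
The plan is to prove both assertions directly, by exhibiting an explicit winning reply for the relevant player, so that no characterization of the full P-position set is needed. Throughout I use the fact that a legal move in the All-but-$k$ game consists of choosing a nonempty \emph{proper} subset $S$ of the $k$ jars together with an integer $d \ge 1$ satisfying $d \le \min_{i \in S} a_i$, and then subtracting $d$ from each jar in $S$; in particular the forbidden subset is exactly the set of all $k$ jars, and taking zero cookies is not a move.

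First I would handle $(n,n,\ldots,n)$ by a mirroring strategy for the second player. Suppose the first player removes $d$ cookies from the jars of a nonempty proper subset $S$; the resulting position has the jars in $S$ holding $n-d$ cookies and the jars of the complement $S^{c}$ still holding $n$ cookies. Since $S$ is nonempty and proper, $S^{c}$ is also nonempty and proper, and $d \le n$ because the first player removed $d$ from jars that held $n$; hence removing $d$ cookies from every jar of $S^{c}$ is a legal reply, and it returns the position to $(n-d,\ldots,n-d)$. By induction on the number of rounds, after each of the second player's moves the position is again constant, with a strictly smaller common value, so after finitely many rounds it is $(0,\ldots,0)$ with the first player to move and no legal move available. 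Therefore $(n,\ldots,n)$ is a P-position.

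Next I would dispose of a position $(a_1,\ldots,a_k)$ whose coordinates take exactly two distinct values $p < q$. Let $S$ be the set of jars holding $q$ cookies. Because both values actually occur, $S$ is nonempty and $S$ is a proper subset (some jar holds $p$). Thus removing $q-p \ge 1$ cookies from every jar of $S$ is a legal move, and it turns every coordinate into $p$, producing the constant position $(p,p,\ldots,p)$, which is a P-position by the previous step. So there is a move from $(a_1,\ldots,a_k)$ into a P-position, which makes $(a_1,\ldots,a_k)$ an N-position.

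The only point that needs care is the legality bookkeeping: that the subsets actually used ($S^{c}$ in the first part, $S$ in the second) are genuinely nonempty proper subsets, and that the amount removed never exceeds the current contents of the chosen jars; both follow at once from the observation that a legal move always strips a positive number of cookies off a nonempty proper set. I do not expect any real obstacle here, since the symmetry of a constant position does all of the work and complementation is exactly what makes constant positions stable for the second player. I would also note explicitly that this argument deliberately says nothing about positions with three or more distinct values, which are genuinely harder and are not part of the stated claim.
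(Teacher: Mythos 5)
Your proof is correct and is essentially the same argument as the paper's: the reply by the complementary set of jars is exactly the paper's ``take $c$ cookies from all of the jars with $n$ cookies'' step, and the equalizing move from a two-valued position to a constant one is identical. You merely package it as an explicit mirroring strategy rather than as a verification that the candidate sets $\overline{\mathcal{P}}$ (constant positions) and $\overline{\mathcal{N}}$ (two-valued positions) satisfy the usual P/N closure properties.
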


\begin{proof}
By induction. Let $\overline{\mathcal{P}}$ be the set of our candidate P-positions, and let $\overline{\mathcal{N}}$ be the set of our candidate N-positions. Now we show that three properties hold:
\begin{enumerate}
\item from each P-position in $\overline{\mathcal{P}}$, we can only move to N-positions in $\overline{\mathcal{N}}$, 
\item from every N-position in $\overline{\mathcal{N}}$, there exists a move to a P-position in $\overline{\mathcal{P}}$,
\item the terminal position is in $\overline{\mathcal{P}}$.
\end{enumerate}

If all the jars have the same number $n$ of cookies, and we take $c$ from some of the jars, then after the move the number of cookies is either $n$ or $n-c$, and both numbers exist. This proves the first statement. 

If some of the jars have $n$ cookies and other jars have $n-c$ cookies. By taking $c$ cookies from all of the jars with $n$ cookies, all of the jars will have $n-c$ cookies, proving the second statement.

The third statement is trivial.

So if we start with any position in $\overline{\mathcal{P}}$, our opponent can only move to a position in $\overline{\mathcal{N}}$. We can then move back to a position in $\overline{\mathcal{P}}$ with a smaller total number of cookies. Eventually we will move to the terminal P-position and win.
\end{proof}

We can generalize it even further:

\begin{theorem}
In a game where a complement of every permissible set is permissible, $(n,n,\ldots,n)$ is a P-position for any $n$.
\end{theorem}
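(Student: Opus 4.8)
The plan is to prove this by a mirroring (pairing) argument, the same idea used for the All-but-$k$ game but now phrased purely through the closure-under-complement hypothesis. First I would fix the standing convention that a permissible set is a nonempty set of jars and that a move consists of choosing a permissible set and removing a positive number of cookies from each jar in it. Under this convention the hypothesis already forces the full set of all $k$ jars to be non-permissible: its complement is the empty set, which is never permissible. I would record this observation first, since it is exactly what prevents the first player from emptying $(n,n,\ldots,n)$ in a single move.

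Next I would describe the second player's strategy and show that it is always available. The claim is that the second player can always answer a first-player move so as to return to a position of the form $(m,m,\ldots,m)$ with a strictly smaller common value. Suppose that from $(m,m,\ldots,m)$ with $m>0$ the first player removes $c$ cookies, $1\le c\le m$, from the jars of a permissible set $S$. Since $S$ is nonempty and, by the observation above, $S\ne\{1,\dots,k\}$, its complement $\overline{S}$ is a nonempty permissible set, and every jar of $\overline{S}$ still holds $m\ge c$ cookies. Hence the second player may legally remove $c$ cookies from each jar of $\overline{S}$, after which all $k$ jars hold $m-c$ cookies. So one full round beginning at $(m,m,\ldots,m)$ ends at $(m-c,m-c,\ldots,m-c)$.

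Finally I would conclude: starting from $(n,n,\ldots,n)$ the second player applies this response repeatedly; the common value of the jars strictly decreases each round, so after finitely many rounds the position is $(0,0,\ldots,0)$, from which no move exists. As the first player is the one to face this terminal position, the first player loses, and therefore $(n,n,\ldots,n)$ is a P-position. I do not anticipate a real difficulty here; the only care needed is the boundary bookkeeping — making the exclusion of the empty set explicit, so that the complement move $S\mapsto\overline{S}$ is always a legal response. As a bonus this recovers the first assertion of the All-but-$k$ theorem, the special case in which the permissible sets are exactly the nonempty proper subsets of the jars.
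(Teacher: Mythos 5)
Your proof is correct and matches the approach the paper intends: the paper states this generalization without proof, but its proof of the preceding All-but-$k$ theorem uses exactly your key move --- the second player answers a move on a permissible set $S$ by removing the same number $c$ of cookies from the complement $\overline{S}$, restoring an all-equal position with a strictly smaller common value. Your explicit preliminary observation that closure under complementation forces the full set of jars to be non-permissible (because the empty set never is) is precisely the one detail the general case requires beyond the paper's special-case argument, and you supply it correctly.
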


\section{The Big Picture}\label{bigpicture}

Cookie Monster is very pleased that he invented many new games. He is also proud that he discovered some properties of some of CM-Nim games. But mathematicians also like to look at the big picture: are there any properties that hold for all of the CM-Nim games, and not just for specific games?

Cookie Monster noticed that in all the games the last value of a P-position is uniquely defined by the previous terms. He also noticed that one of the numbers is never more than twice bigger than the other two. This is his contribution to the big picture.

\subsection{Authors' comments}
Cookie Monster's observation about the dependency of the last jar on the other jars is correct. The following theorem is true for all CM-Nim games with $k$ jars.

\begin{theorem}
In any CM-Nim game, for a position with the number of cookies in $k-1$ jars known and one unknown: $P = (a_1, \ldots, a_{k-1}, x)$, there is a unique value of $x$ such that the $P$ is a P-position.
\end{theorem}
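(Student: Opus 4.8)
The plan is to separate the claim into uniqueness, which is immediate, and existence, which carries all the weight. For uniqueness: every CM-Nim game contains all the single-jar moves of Nim, so if both $(a_1,\dots,a_{k-1},x)$ and $(a_1,\dots,a_{k-1},x')$ with $x<x'$ were P-positions, then removing $x'-x$ cookies from the last jar alone is a legal move from the second position to the first — a move from a P-position to a P-position, which cannot happen. (This is exactly the mechanism behind the Nim corollary stated earlier.) So at most one value of $x$ works, and it remains to produce one.

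For existence, write $\vec a=(a_1,\dots,a_{k-1})$ and induct on $N=a_1+\cdots+a_{k-1}$. The structural fact that makes the induction go through is that every legal move from a position $(\vec a,x)$ is of exactly one of two kinds: a single-jar move on the last jar, which fixes $\vec a$ and lowers $x$; or a move using a permissible set meeting $\{1,\dots,k-1\}$, which strictly decreases $N$. By the induction hypothesis, for every $\vec b$ with $b_1+\cdots+b_{k-1}<N$ there is a unique $x_{\vec b}$ with $(\vec b,x_{\vec b})$ a P-position, so the P/N status of every such $(\vec b,y)$ is already determined ($\mathrm P$ iff $y=x_{\vec b}$). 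Now form the finite set $F\subseteq\mathbb{Z}_{\ge 0}$ of those values $y$ for which some $N$-decreasing move carries the position with last coordinate $y$ to a P-position: concretely, $F$ contains $x_{\vec a'}$ for each $\vec a'=\vec a-c\mathbf{1}_T$ with $T\subseteq\{1,\dots,k-1\}$ permissible and $1\le c\le\min_{i\in T}a_i$ (such a move keeps the last coordinate), together with $c+x_{\vec a''}$ for each permissible set of the form $U\cup\{k\}$ with $\emptyset\ne U\subseteq\{1,\dots,k-1\}$ and $1\le c\le\min_{i\in U}a_i$, where $\vec a''=\vec a-c\mathbf{1}_U$ (such a move drops the last coordinate by $c$). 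Set $m=\operatorname{mex}(F)$.

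I would then verify three things. First, every $x\in F$ — hence every $x<m$ — gives an N-position, since by construction there is a move from $(\vec a,x)$ to one of the known P-positions listed above. Second, $(\vec a,m)$ is a P-position: a last-jar move from it reaches $(\vec a,x')$ with $x'<m$, an N-position; an $N$-decreasing move fixing the last coordinate reaches $(\vec a',m)$ with $m\ne x_{\vec a'}$ (because $x_{\vec a'}\in F\not\ni m$), an N-position; and an $N$-decreasing move lowering the last coordinate by $c$ reaches $(\vec a'',m-c)$ with $m-c\ne x_{\vec a''}$ (because $c+x_{\vec a''}\in F\not\ni m$), an N-position. Third, every $x>m$ gives an N-position via the last-jar move to $(\vec a,m)$. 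Hence $m$ is the unique P-value for $\vec a$, which is the induction step; the base case $N=0$ is the position $(0,\dots,0,x)$, where $F=\emptyset$, $m=0$, and $(0,\dots,0,0)$ is the terminal P-position.

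The main obstacle is choosing an induction parameter that behaves well even though a permissible set may touch several jars simultaneously: using the total $a_1+\cdots+a_{k-1}$ in the first $k-1$ jars works because the only move that fails to decrease it is a single-jar move on the last jar, which is precisely the move resolved by the one-dimensional mex computation. The only technical point to check carefully is that $F$ is finite — it is, because each jar holds finitely many cookies and there are finitely many permissible sets — so that $\operatorname{mex}(F)$ is defined; everything else is bookkeeping.
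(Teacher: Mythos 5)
Your proof is correct, but the existence half takes a genuinely different route from the paper's. The paper argues by contradiction and counting: if every $(a_1,\ldots,a_{k-1},x)$ were an N-position, each would need a move to a P-position that touches one of the first $k-1$ jars; there are at most $m(2^k-1)$ such moves (with $m=\max\{a_1,\ldots,a_{k-1}\}$), so the pigeonhole principle forces two different values of $x$ to use the same move, producing two P-positions differing only in the last coordinate and contradicting the uniqueness part. You instead induct on $a_1+\cdots+a_{k-1}$, observe that every move either acts on the last jar alone or strictly decreases that sum, collect the finitely many ``forbidden'' last-coordinate values $F$ reachable-to-P via sum-decreasing moves, and show $\operatorname{mex}(F)$ is the unique P-value; your verification of the three cases (last-jar moves, sum-decreasing moves fixing the last coordinate, sum-decreasing moves lowering it) is complete and the legality conditions ($c\le y$ when jar $k$ is involved) are handled correctly. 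The paper's argument is shorter and yields an explicit a priori bound $x\le m(2^k-1)$ on the P-value; yours is constructive, giving a recursion that actually computes $x$ as a mex --- in the spirit of the Sprague--Grundy theory the paper invokes elsewhere --- at the cost of setting up an induction over all smaller tuples rather than reasoning about the single tuple $(a_1,\ldots,a_{k-1})$ in isolation.
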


\begin{proof}
The proof consists of two parts.

\emph{Part 1: Two P-positions cannot differ in the last number only}. 

We can always make a move from a position with a greater value of $x$ to the position with the smaller value of $x$, contradicting the fact that a P-position can only move to N-positions.

\emph{Part 2: There exists $x$ such that $P$ is a P-position}. 

Suppose that all of the positions $(a_1, \ldots, a_{k-1}, x)$ are N-positions, which means from each of them, there exists a move to a P-position. Clearly, this move cannot involve only taking from the last jar by our assumption, so our move must involve taking from at least one of the first $k-1$ jars. 

Now we will give an upper bound $U$ on the number of possible moves that there are from these positions that could lead to P-positions. The maximum number of permissible sets is $2^k - 1$. Since we cannot take from the last jar only, we can take at most $m = \max\{a_1, \ldots, a_{k-1}\}$ cookies from each jar, so $U \leq m(2^k - 1)$.

Consider the following assumed N-positions: $(a_1, \ldots, a_{k-1}, x)$, where $0 \leq x \leq U$. Applying the Pigeonhole principle, we see there must be two positions where we used the same move to get to a P-position. Suppose that this move removes $b_i$ cookies from jar $i$, and let the two positions be $(a_1, \ldots,  a_{k-1}, A)$ and $(a_1, \ldots,  a_{k-1}, B)$. Then the P-positions where we moved to are $(a_1 - b_1, \ldots,  a_{k-1} - b_{k-1}, A - b_k)$ and $(a_1 - b_1, \ldots,  a_{k-1} - b_{k-1}, B - b_k)$. However, by part 1, this is a contradiction, which means that for some value of $x$, $(a_1, \ldots, a_{k-1}, x)$ is a P-position.
\end{proof}

We can easily generalize the proof for any combination of $n-1$ jars, not necessarily the first $n-1$. In other words, the $i$th number in a P-position is a function of the other numbers.

Cookie Monster is also correct about the bound on the largest number in P-positions of a CM-Nim game. To prove this bound, we need the following lemma:

\begin{lemma}\label{samejar}
Suppose that we start with a P-position, and a player takes from a set $S$ of jars. Then the other player cannot take from the exact same set $S$ of jars to get to a P-position.
\end{lemma}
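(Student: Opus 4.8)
The plan is to exploit a simple algebraic fact: two successive moves that both take cookies from the \emph{same} set $S$ of jars compose into a single move on $S$. Write the starting P-position as $P = (a_1, \ldots, a_k)$. Suppose the first player removes $c \ge 1$ cookies from each jar of the permissible set $S$, reaching a position $Q$; since $P$ is a P-position, $Q$ is automatically an N-position. Now assume, toward a contradiction, that the second player removes $d \ge 1$ cookies from each jar of the same set $S$, reaching a position $R$, and that $R$ is a P-position.

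The key step is to observe that $R$ is reachable from $P$ in a single legal move, namely by removing $c + d$ cookies from each jar in $S$. Legality is the only thing to verify. Each jar $i \in S$ holds $a_i$ cookies in $P$ and $a_i - c - d$ cookies in $R$, and this quantity is nonnegative because the second player's move was legal; hence $c + d \le \min_{i \in S} a_i$, so the combined removal is permissible. Moreover $c + d \ge 2 > 0$, so it is a genuine move, and $S$ itself is a permissible set by hypothesis since both players already took from it. Thus the combined move is a valid move of the game that carries $P$ directly to $R$.

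Finally, because $P$ is a P-position, every move from $P$ must land on an N-position; in particular $R$ is an N-position, contradicting the assumption that $R$ is a P-position. Therefore the second player cannot reach a P-position by taking from $S$. I do not expect any real obstacle here: the only point requiring care is confirming that the composite move is both legal and nontrivial, and that is immediate from the legality of the two individual moves.
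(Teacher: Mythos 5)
Your proof is correct and is essentially the paper's own argument: the paper's one-line proof ("the first player could have moved to this P-position instead") is exactly the composition of the two moves on $S$ into a single legal move of $c+d$ cookies, which you simply spell out in full together with the legality check.
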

\begin{proof}
Suppose that the other player can do so. But this means that the first player could have moved to this P-position instead, which is a contradiction.
\end{proof}

Now we are able to prove the bound.

\begin{theorem}
In a P-position of a CM-Nim game with $k$ jars, one of the numbers is never larger than twice the sum of the other numbers.
\end{theorem}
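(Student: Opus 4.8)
The plan is to argue by induction on the total number of cookies $a_1 + \cdots + a_k$ in the position. Fix a P-position $P = (a_1,\dots,a_k)$ and let jar $m$ hold a maximum number of cookies; to keep the notation light I will write as though $m = k$, which is legitimate because (as noted after the uniqueness theorem) the $i$-th coordinate of a P-position is determined by the other $k-1$ coordinates for every $i$. Set $\Sigma = a_1 + \cdots + a_{k-1}$ and suppose, for contradiction, that $a_k \geq 2\Sigma + 1$. If $\Sigma = 0$ this forces $P = (0,\dots,0,a_k)$ with $a_k \geq 1$, which is an N-position since jar $k$ can be emptied in one move; so we may assume $\Sigma \geq 1$ (hence $a_k \geq 3$). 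The probe will be the position $N = (a_1,\dots,a_{k-1},a_k - 1)$: by the uniqueness theorem the only P-position whose first $k-1$ coordinates are $a_1,\dots,a_{k-1}$ has last coordinate $a_k$, so $N$ is an N-position.

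Because $N$ is an N-position, some move sends $N$ to a P-position $R$. That move cannot take cookies from jar $k$ only, for then $R = (a_1,\dots,a_{k-1},y)$ with $y < a_k$, which is not a P-position by uniqueness. So the move removes a common amount $d \geq 1$ from a permissible set $S$ with $T := S \cap \{1,\dots,k-1\} \neq \emptyset$; write $\epsilon = 1$ if $k \in S$ and $\epsilon = 0$ otherwise. Then the first $k-1$ coordinates of $R$ equal $a_i$ for $i \notin T$ and $a_i - d$ for $i \in T$, so $\Sigma' := \sum_{i<k} R_i = \Sigma - d\,|T| \leq \Sigma - 1$, while $R_k = (a_k - 1) - \epsilon d$. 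Since $R$ has strictly fewer cookies than $P$, the inductive hypothesis applies to $R$.

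The heart of the argument is that jar $k$ is still the strict maximum in $R$: using $|T| \geq 1 \geq \epsilon$, $d \geq 1$, and $a_k \geq 2\Sigma + 1$, one computes $R_k - \Sigma' = (a_k - 1 - \Sigma) + d(|T| - \epsilon) \geq a_k - 1 - \Sigma \geq \Sigma \geq 1$, so $R_k > \Sigma' = \sum_{i<k} R_i$ and in particular $R_k$ exceeds every other coordinate of $R$. The inductive hypothesis then gives $R_k \leq 2\Sigma'$, that is $(a_k - 1) - \epsilon d \leq 2(\Sigma - d\,|T|)$, which rearranges to $a_k \leq 2\Sigma + 1 + d(\epsilon - 2|T|)$. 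As $d \geq 1$ and $\epsilon - 2|T| \leq 1 - 2 = -1$, the correction term is at most $-1$, so $a_k \leq 2\Sigma$, contradicting $a_k \geq 2\Sigma + 1$; this closes the induction (the base case $P = (0,\dots,0)$ being immediate). I expect the real difficulty to lie in setting this up rather than in the final estimate: one must probe with the position obtained by removing \emph{exactly one} cookie from a largest jar (removing more erodes the inequality $R_k > \Sigma'$), and one must observe that the recovering move is \emph{forced} to disturb at least one of the small jars, which is precisely what makes $\Sigma' \leq \Sigma - 1$ and powers the contradiction. The optional extra cookie $\epsilon d$ that the move may also take from jar $k$ needs a little bookkeeping but never changes the conclusion.
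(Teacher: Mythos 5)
Your proof is correct and follows essentially the same strategy as the paper's: remove a single cookie from the largest jar to reach an N-position, note that the recovering move is forced to disturb at least one of the smaller jars, and apply strong induction (you induct on the total number of cookies, the paper on the sum of the non-maximal entries) to the resulting P-position to force a contradiction. Your write-up is in fact slightly more careful, since you verify explicitly that jar $k$ remains the strict maximum in $R$ before invoking the inductive hypothesis, a point the paper's proof leaves implicit.
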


\begin{proof}
Suppose that a P-position is of the form $(a_1, a_2, \ldots, a_k)$. It suffices to prove the statement for the largest value of $a_i$, where $0 \leq i \leq k$, which we will call $a_j$. We can write this inequality as $2(\sum_{i=1}^n a_i - a_j) \geq a_j$.

We prove this statement by strong induction on the sum $T = \sum_{i=1}^{k}  a_i - a_j$. Our base case is $T=0$, which is obvious. Now suppose that the statement is true for $T \leq n$. We want to prove the statement for $T = n+1$.

We proceed by contradiction. Suppose the position $P=(a_1, a_2,\ldots, a_k)$ is a P-position, and $2(n+1) < a_j$. All possible moves from $P$ must be to N-positions. Thus, the move that takes exactly one cookie from jar $j$ will result in an N-position.

From this N-position, we can then move to a P-position. If we want to get to a P-position, we cannot only take cookies from jar $j$ by Lemma~\ref{samejar}. Suppose that the P-position that we move to is $P' = (a'_1, a'_2, \ldots, a'_k)$, and let $T' = \sum_{i=1}^{k}  a'_i - a'_j$. If we do not take from jar $j$ in our second move, we will have $2T' \le 2n < 2n + 2 - 1 < a_j - 1$, which, by the inductive hypothesis is a contradiction. Now suppose that we take $c$ cookies from jar $j$ as well as from $d$ of the other jars. Then $2cd \geq 1+c$, with equality if $c=d=1$, so $2T' = 2(n+1 - cd) < a_j - 1 - c$, which is a contradiction, and we are done.
\end{proof}

\section{Acknowledgements}\label{acknowledgements}

Cookie Monster and the authors are grateful to the MIT-PRIMES program for supplying cookies and supporting this research.

\end{document}